\def\supp{1} 
\newtheorem{theorem}{Theorem}[section]
\newtheorem{proposition}{Proposition}[section]
\newtheorem{lemma}[theorem]{Lemma}
\newtheorem{definition}[theorem]{Definition}
\theoremstyle{remark}
\newtheorem{remark}{Remark}
\begin{document}
\title{A short note on the operator norm upper bound for sub-Gaussian tailed random matrices}

\author{Eric Benhamou \thanks{A.I. Square Connect} \thanks{Lamsade, Paris Dauphine} \thanks{Email:
    \texttt{eric.benhamou@aisquareconnect.com, eric.benhamou@dauphine.eu}}  \and  
Jamal Atif \footnotemark[2] \thanks{Email: \texttt{jamal.atif@dauphine.fr}}  \and 
Rida Laraki \footnotemark[2] \thanks{Email: \texttt{rida.laraki@dauphine.fr}}}
\maketitle

\begin{abstract}
This paper investigates an upper bound of the operator norm for sub-Gaussian tailed random matrices. A lot of attention has been put on uniformly bounded sub-Gaussian tailed random matrices with independent coefficients. However, little has been done for sub-Gaussian tailed random matrices whose matrix coefficients variance are not equal or for matrix for which coefficients are not independent. This is precisely the subject of this paper. After proving that random matrices with uniform sub-Gaussian tailed independent coefficients satisfy the Tracy Widom bound, that is, their matrix operator norm remains bounded by $O(\sqrt n )$ with overwhelming probability, we prove that  a less stringent condition is that the matrix rows are independent and uniformly sub-Gaussian. This does not impose in particular that all matrix coefficients are independent, but only their rows, which is a weaker condition. 
\end{abstract}

\section{Introduction}
Random matrices and their spectra have been under intensive study in many fields. This is the case in Statistics since
the work of \cite{Wishart_1928} on sample covariance matrices, in Numerical Analysis since their introduction by  \cite{VonNeumann_1947} in the 1940s, in Physics as a consequence of the work of \cite{Wigner_1955, Wigner_1958} since the 1950s on in Banach Space Theory and Differential Geometric Analysis with the work of \cite{Grothendieck_1956} in a similar period. 
More recently, in machine learning, the netflix prize (see \cite{wiki:Netflix_prize}) has attracted a lot of attention with a large part of the community investigating recommender systems (see \cite{wiki:recommender_system}) and collaborative filtering methods, which ultimately also rely on random matrices and their eigen and singular values spectra. 

In particular, an interesting and important problem in matrix completion problem has been to investigate where the operator norm is concentrated to be able to make some reasonable assumptions about missing entries. Other important contribution have been the Tracy Widom law, which says that for Wigner matrix, the operator norm is concentrated in the range of $\left[ 2 \sqrt{n} - O(n^{-1/6}), \right.$ $ \left. 2 \sqrt{n} + O(n^{-1/6}) \right]$ 
(see \cite{Tracy_1994}), and the Marchenko–Pastur distribution that describes the asymptotic behavior of singular values of large rectangular random matrices (see \cite{Marcenko_1967}). 

However, most of these results have been derived under the assumptions of independent and identically distributed coefficients. It is natural to ask similar questions about general random matrices whose entries distribution may differ. In particular, to make the question more concrete, we are interested in finding an upper bound of the operator norm of a random matrix whose coefficients are sub Gaussian and see the implied consequence for the matrix coefficients. The paper is organized as follows. In section \ref{intro}, we recall various definitions. In section \ref{main}, we first proved that for independent and uniform sub-Gaussian tailed random squared matrices their operator norm satisfies the Tracy Widom bound, that is, the matrix operator norm for the $L_a, L_b$ norm remains bounded by $O(\sqrt n )$. We see that a less stringent sufficient condition is that the matrix rows  $L_{a}$  norms are uniformly sub-Gaussian and independent. This implies in particular that a matrix with coefficients that are not necessarily independent and sub-Gaussian can still validate an upper bound for the its operator norm of $O(\sqrt n )$ with overwhelming probability.

The condition of independence of rows has already been mentioned in \cite{vershynin_2018} with a similar setting and proof and appeared as early as 2017. Additionally, \cite{Benaych_2018} provided a similar proof in the Hermitian case and pointed kindly to the authors the last two references that authors were not aware of at the time of their writing. This article has at least the merit to be self contained and to focus only on sub-Gaussian random matrix making the presentation shorter and self consistent. But for more details, we advise the reader to refer to the last two references that cover a much wider scope and are respectively 300 and 80 pages long.

\section{Some definitions}\label{intro}
Suppose $\| \cdot \|_a$ and  $\| \cdot \|_b$ are norms on $\mathbb{R}^m$ and $\mathbb{R}^n$, respectively. We can of course generalize easily the concept to norms operating on $\mathbb{C}^m$ and $\mathbb{C}^n$ if we look at matrices with complex number coefficients.

\begin{definition}\label{operatornorm_def}
We define the operator norm of $ \mathbf{X} \in  \mathbb{R}^{m \times n}$, induced by the norms  $\| \dots \|_a$ and  $\| \dots \|_b$, as
\begin{equation}\label{operatornorm}
\|  \mathbf{X}  \|_{a,b} = \operatorname{sup} \{  \|  \mathbf{X}u  \|_{a} \,\, | \,\, \| u \|_{b} \leq 1\} .
\end{equation}
We will denote this norm as $\|  \cdot  \|_{op}$ and we will drop the $a,b$ indices to make things simpler whenever there is no risk of confusion and have the following definition
\begin{equation}\label{operatornorm2}
\|  \mathbf{X}  \|_{op} = \operatorname{sup} \{  \|  \mathbf{X}u  \| \,\, | \,\, \| u \| \leq 1\} .
\end{equation}

\end{definition}

When $\| \cdot \|_a$ and  $\| \cdot \|_b$ are both Euclidean norms, the operator norm of $\mathbf{X}$ is its
\textit{maximum singular value}, and is denoted  $\| \cdot \|_2$:
\begin{equation}
\|  \mathbf{X}  \|_{2} = \sigma_{\text{max}}( \mathbf{X}  ) = ( \lambda_{\text{max}} (\mathbf{X}^T \mathbf{X}  ))^{1/2}.
\end{equation}
where $ \sigma_{\text{max}}( \mathbf{X}  )$ is the maximum singular value of the matrix $\mathbf{X}$ and where $\lambda_{\text{max}} (\mathbf{X}^T \mathbf{X}  )$ is the maximum eigen value of the matrix $\mathbf{X}^T \mathbf{X}$ also defined as $\text{sup} \{ u^T \mathbf{X}^T \mathbf{X}  u\,\|  \,\ \| u \|_2 = 1\}$. 

In the rest of the paper, we will assume to simplify notation that $a = b = 2$ to keep things simple but all results remain the same for any $a, b \geq 1$.

\begin{remark}
For the trivial matrix consisting entirely of single ones, it  has an operator norm of exactly $n$. 
This can be seen easily by taking the vector $u = ( 1/ \sqrt n, \ldots, 1/ \sqrt n)^T$  that gives $ \| \mathbf{X}u   \|_{	2} = n$, and proves that the operator norm should be at least equal to $n$. But the Cauchy-Schwarz inequality proves that it cannot be more than $n$. This vector is the right one to choose for the $L_2$ norm. But using the fact that any norm is equivalent in finite dimension (and that the matrix space is of finite dimension $n^2$), this result is not specific to the $L_2$ norm and is true for any norm. 

Furthermore, the same application of the Cauchy Schwartz proves that the operator norm of any matrix whose coefficients are uniformly bounded by a constant $K$ has an operator norm bounded by $Kn$. In other words, using the Landau notation, any matrix whose entries are all uniformly $O(1)$ has an operator norm of $O(n)$. However, this upper bound does not take into account of any possible cancellations in the matrix $M$. Indeed, intuitively, using the concentration inequality of Hoeffding and Markov, we should expect with overwhelming probability (a notion that we will define shortly) that the operator norm should be bounded by $\sqrt{n}$ rather than $n$ in most cases where matrices coefficients are symmetrically distributed and have tails that are decreasing fast enough, a concept that we will also make more precised shortly with the concept of sub-Gaussian tails.

As for Euclidean norms, the operator norm boils down to computing the maximum singular value and for symmetric matrices, the maximum eigen values, it gives fruitful information about the these two quantities.
\end{remark}

\begin{definition}
A random variable  $\xi$  is called sub-Gaussian if there are non negative constants $B, b > 0$ such that for every $t > 0$,
\begin{equation}\label{sub-Gaussian}
\mathbb{P}(| \xi |>t) \leq B \exp(- b t^2).
\end{equation}
where $\mathbb{P}$ is the probability measure defined on a usual probability space $ \Omega = ( \Omega, \mathcal{B}, \mathbb{P})$. where $ \Omega$ is the ambient sample space, associated with a $\sigma$-algebra $\mathcal{B}$ of subsets of 
$\Omega$.
\end{definition}

\begin{remark}
Sub-Gaussian can be defined in multiple ways. We have used the traditional definition that states that the tails of the variable $\xi$ are dominated by, meaning they decay at least as fast as, the tails of a Gaussian. A more probabilistic way of defining the sub-Gaussian  is to state that a random variable  $\xi$  is called sub-Gaussian with variance proxy $\sigma$ if
\begin{equation}\label{sub-Gaussian2}
\mathbb{P}(| \xi  - \mathbb{E}[X] |>t) \leq 2 \exp(- \frac{ t^2}{2 \sigma^2}).
\end{equation}

Chernoff bound allows to translate a bound on the moment generating function into a tail bound and vice versa. So we should expect to have equivalent definition in terms of moment generating, Laplace transform and many more criteria. Indeed, there are many equivalent definitions ( that can be found for instance in \cite{Buldygin_1980} or \cite{Ledoux_1991})
\begin{itemize}
\item A random variable  $\xi$  is sub-Gaussian.

\item A random variable  $\xi$  satisfies the $\psi_2$ -condition, that is, there exist  two non negative real constants $B, b>0$ such that $ \mathbb{E}[e^{b \xi^2}] \leq B$.

\item A random variable  $\xi$  satisfies the Laplace transform condition, that is there exist two non negative real constants $B, b>0$ such that $\forall \lambda \in \mathbb{R}$,  $\ \  \mathbb{E}[e^{\lambda (\xi-\operatorname{E}[\xi])} ] \leq Be^{\lambda^2 b / 2}$. This condition is also referred to as the moment generating-condition, that is there exist two non negative real constants $B, b>0$ such that $ \mathbb{E}[ e^{t \xi} ] \leq B e^{t^2 b^2 /2 }$. The parameter $b$ is directly related to the variance proxy $\sigma$.

\item A random variable  $\xi$  satisfies  the Moment condition, that is there exists a non negative real constant $K>0$ such that $\  \forall p \geq 1 \ \left ( \mathbb{E}[ |\xi|^p \right ])^{1/p} \leq K \sqrt{p}$. It is easy to see with for instance Gaussian variables that $K$ can be expressed with respect to the variance proxy $\sigma$ as follows: $K= \sigma e^{1 / e}$ for $k \geq 2$ and $K = \sigma \sqrt{ 2 \pi}$.

\item A random variable  $\xi$  satisfies  the Union bound condition, that is there exists a non negative real constant $c>0$ such that $ \forall n \ge c \ \mathbb{E}[\max\{|\xi_1 - \operatorname{E}[\xi]|,\ldots,|\xi_n - \operatorname{E}[\xi]|\}] \leq c \sqrt{\log n}$  where $\xi_1, \ldots, \xi_n$ are independent and identically distributed random variables, copies of $\xi$.

\item The tail is less than the one of a Gaussian of variance proxy $\sigma$, there exist $b > 0$ and $Z \sim \mathcal{N}(0, \sigma^2)$ such that 
$\mathbb{P}(| \xi |>t) \leq b \mathbb{P}(| Z | \geq t )$. The latter definition explains the term sub-Gaussian constants quite well.
\end{itemize}
Obviously, the different negative real constants $B, b>0$ are not necessarily the same. 
\end{remark}

\begin{definition}
Referring to \cite{Tao_2013}, we say that an event $E$ holds with overwhelming probability if, for every fixed real constant
$k > 0$, we have 
\begin{equation}\label{overwhelming}
\mathbb{P}(E) \geq   1 - C_k / n^k
\end{equation}
\noindent for some constant $C_k$ independent of $n$ or equivalently 
$\mathbb{P}(E^{c}) \leq    C_k  e^{ -k \ln n } $ where $A^{c}$ denotes the complementary of $A$.
\end{definition}

\begin{remark}
Of course, the concept of overwhelming probability can be extended to a family of events $E_{\alpha}$ depending on some parameter $\alpha$ with the condition that each event in the family holds with overwhelming probability uniformly in $\alpha$ if the constant $C_k$ in the definition of overwhelming probability is independent of $\alpha$.
\end{remark}

\begin{remark}
Using Boole's inequality (also referred to as the union bound in the English mathematical literature) that states that the probability measure is $\sigma$-sub additive, we trivially see that if a family of events $E_{\alpha}$ of polynomial cardinality holds with overwhelming probability, then the intersection over $\alpha$ of this family $\bigcap \limits_{\alpha} E_{\alpha}$ still holds with overwhelming probability.
\end{remark}

\begin{remark}
The previous Boole's inequality remark emphasizes that although the concept of overwhelming probability is not the same as the one of almost surely, it is still something with very high probability. In the rest of the paper, we will even get tighter bound and prove
\begin{equation}\label{overwhelming2}
\mathbb{P}(E^{c}) \leq    C_k  e^{ -k n } 
\end{equation}
which implies that the event $E$ holds with overwhelming probability.
\end{remark}

\section{Upper bound for operator norm for sub-Gaussian tailed matrices}\label{main}
Equipped with these definition, we shall prove the following statement

\begin{proposition}\label{prop1}
Let a squared matrix $M$ be with independent coefficients $\xi_{i,j}$ with zero mean that are uniformly sub-Gaussian , then there exist non negative real constants $C,c > 0$ such that
\begin{equation}
\mathbb{P} ( \| M \|_{op} > A \sqrt n )\leq C \exp( -c A n) \label{eq1}
\end{equation}
\noindent for all $ A \geq C$. In particular, we have $\| M \|_{op} = O(\sqrt n )$
with overwhelming probability
\end{proposition}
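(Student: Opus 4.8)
The plan is to run the classical covering-net argument, exploiting that the sub-Gaussian class is stable under linear combinations of independent variables. First I would rewrite the operator norm as a bilinear supremum over the two Euclidean unit spheres, $\| M \|_{op} = \sup_{\|u\|_2 = 1,\, \|v\|_2 = 1} \langle v, M u\rangle$, where $\langle v, Mu\rangle = \sum_{i,j} v_i\, \xi_{i,j}\, u_j$. For \emph{fixed} unit vectors $u,v$ this is a linear combination of the independent, mean-zero, uniformly sub-Gaussian entries $\xi_{i,j}$, with weights $w_{i,j} = v_i u_j$ satisfying $\sum_{i,j} w_{i,j}^2 = \|v\|_2^2 \|u\|_2^2 = 1$. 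Using the moment-generating (Laplace transform) characterization of sub-Gaussianity recalled in the preceding remarks together with independence, the moment generating function factorizes across the $\xi_{i,j}$, and one obtains that $\langle v, Mu\rangle$ is itself sub-Gaussian with variance proxy bounded by an absolute constant. This delivers a pointwise tail estimate of the form $\mathbb{P}(\langle v, Mu\rangle > t) \le B\exp(-b t^2)$ with $b,B$ independent of $n$, $u$ and $v$.

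The second step is to discretize the supremum. I would fix $\epsilon = 1/4$ and choose an $\epsilon$-net $\mathcal{N}$ of $S^{n-1}$ of cardinality $|\mathcal{N}| \le (1 + 2/\epsilon)^n = 9^n$, the standard volumetric packing bound. A short approximation lemma then shows that the operator norm is comparable to its maximum over the net, namely $\| M \|_{op} \le (1-2\epsilon)^{-1} \max_{u,v \in \mathcal{N}} \langle v, M u\rangle = 2 \max_{u,v \in \mathcal{N}} \langle v, Mu\rangle$ (the sign is harmless since replacing $v$ by $-v$ handles the absolute value). The purpose of this reduction is that the supremum over an uncountable set is replaced by a maximum over at most $|\mathcal{N}|^2 \le 9^{2n}$ pairs, to which the union bound applies directly.

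Finally I would combine the two ingredients. Applying the pointwise tail bound with $t = A\sqrt{n}/2$ and the union bound over the $9^{2n}$ pairs gives
\begin{equation}
\mathbb{P}\!\left( \| M \|_{op} > A\sqrt n \right) \le 9^{2n}\, B \exp\!\left( -b A^2 n / 4 \right) = B \exp\!\left( -\left( b A^2/4 - 2\ln 9\right) n \right).
\end{equation}
The decisive point — and the step I expect to be the main obstacle — is the competition between the exponential growth $e^{2n\ln 9}$ of the net cardinality and the Gaussian decay $e^{-bA^2 n/4}$ of the tail: the estimate only closes once $A$ is large enough that the quadratic gain in $A$ overwhelms the fixed logarithmic loss from the net. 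Choosing $C$ so that $b A^2/4 - 2\ln 9 \ge c A$ for all $A \ge C$ (possible since $A^2$ eventually dominates $A$, using $A \ge C \ge 1$) yields exactly $\mathbb{P}(\| M\|_{op} > A\sqrt n) \le C\exp(-cAn)$ for all $A \ge C$. Since for every fixed $k$ this right-hand side falls below $C_k/n^k$ for $n$ large, the event $\{\| M\|_{op} \le A\sqrt n\}$ holds with overwhelming probability, i.e. $\| M\|_{op} = O(\sqrt n)$, as claimed.
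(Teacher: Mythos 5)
Your proof is correct, but it follows a genuinely different route from the paper. You discretize the operator norm as a \emph{bilinear} form, $\sup_{u,v}\langle v, Mu\rangle$, over two $\epsilon$-nets of cardinality $9^n$, and you get the pointwise tail by viewing $\langle v, Mu\rangle=\sum_{i,j}v_i\xi_{i,j}u_j$ as a weighted sum of all $n^2$ independent mean-zero entries, whose moment generating function factorizes to give a \emph{dimension-free} sub-Gaussian bound with variance proxy $O(1)$; the whole difficulty is then, as you correctly identify, beating the entropy factor $9^{2n}$ with the Gaussian decay $e^{-bA^2n/4}$, which is exactly what the threshold $A\geq C$ is for. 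The paper instead uses a one-sided net: its Lemma 3.2 is an approximation lemma based on the Lipschitz property of $x\mapsto\|Mx\|$ (showing $\|My\|\geq\|M\|_{op}(1-\varepsilon)$ for some net point $y$), its Lemma 3.4 bounds the net cardinality by $C/\varepsilon^{n-1}$, and its pointwise estimate (Lemma 3.1) is much cruder than yours: it bounds $|R_iu|\leq\|R_i\|\leq\sqrt{n}\max_j|\xi_{i,j}|$ row by row, ignoring all cancellation within a row, and only uses independence \emph{across} rows via a product of $\psi_2$ bounds. The trade-offs are instructive. Your argument is the standard sharp one (it actually yields the stronger bound $\exp(-cA^2n)$ before you relax it to $\exp(-cAn)$), and its constants are genuinely independent of $n$, whereas the paper's own Lemma 3.1 produces $n$-dependent constants ($b'=b/n$, $C=B^n$), a defect your route avoids entirely; on the other hand, your entry-wise factorization of the MGF requires full independence of all coefficients and the zero-mean hypothesis, so it does not adapt to Proposition 3.2 of the paper (only rows independent, rows uniformly sub-Gaussian in norm), while the paper's lossier row-based pointwise bound is precisely what carries over to that weaker hypothesis.
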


\begin{proof}
See \ref{proof1}.
\end{proof}

\begin{remark}
This result is quite natural as the matrix coefficients $\xi_{i,j}$ are uniformly sub-Gaussian. Indeed in the proof, we have used the fact that the matrix coefficients $\xi_{i,j}$ $L_{\infty}$ norm was sub-Gaussian, hence any of the matrix row for the $L_a$ was sub-Gaussian. But can we go further and find a less stringent sufficient condition for the inequality \ref{eq1} to hold? The answer is yes and is provided by the condition stated in proposition \ref{prop2}.
\end{remark}

\begin{proposition}\label{prop2}
Let a squared matrix $M$ such that any of its row is uniformly sub-Gaussian for the norm $L_a$ and independent, then there exist non negative real constants $C,c > 0$ such that
\begin{equation}
\mathbb{P} ( \| M \|_{a,b} > A \sqrt n )\leq C \exp( -c A n) \label{eq2}
\end{equation}
\noindent for all $ A \geq C$. In particular, we have $\| M \|_{a,b} = O(\sqrt n )$
with overwhelming probability
\end{proposition}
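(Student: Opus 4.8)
The plan is to follow the same $\varepsilon$-net strategy used in the proof of Proposition \ref{prop1}, but to invoke only the independence of the rows rather than of all the entries. Write $r_1,\dots,r_n$ for the (now independent) rows of $M$. To simplify I keep $a=b=2$, as the paper suggests, so that for any fixed direction $u$ with $\|u\|_2\le 1$ one has
\begin{equation}
\|Mu\|_2^2 = \sum_{i=1}^{n} \langle r_i, u\rangle^2,
\end{equation}
the general $L_a,L_b$ case following by equivalence of norms in the finite-dimensional space $\mathbb{R}^n$. The hypothesis that each row is uniformly sub-Gaussian is read, exactly as in the proof of Proposition \ref{prop1}, as the statement that each scalar projection $\langle r_i, u\rangle$ is sub-Gaussian with a constant that is uniform in $i$ and in the unit vector $u$; the entries within a single row need not be independent, which is precisely the weakening being claimed.

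First I would establish a \emph{fixed-direction} tail bound. Because the rows are independent, for a fixed $u$ the variables $\langle r_i, u\rangle$ are independent and sub-Gaussian, so their squares are independent sub-exponential random variables with mean $O(1)$. A Bernstein-type estimate (equivalently, the $\psi_2$ / moment characterisations recalled among the definitions) then shows that $\sum_i \langle r_i, u\rangle^2$ concentrates around its mean $O(n)$, giving, for every unit $u$,
\begin{equation}
\mathbb{P}\bigl(\|Mu\|_2 > t\sqrt n\bigr) \le C_0\exp\bigl(-c_0 t^2 n\bigr)
\end{equation}
valid once $t$ exceeds an absolute constant. This is the only step that uses the independence of the rows, and it replaces the full coefficient-independence exploited in Proposition \ref{prop1}.

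Next I would pass from a single direction to the supremum by a covering argument. Fixing $\varepsilon=1/2$, let $\mathcal N$ be an $\varepsilon$-net of the unit ball of $(\mathbb{R}^n,\|\cdot\|_b)$; a standard volumetric bound gives $|\mathcal N|\le (1+2/\varepsilon)^n = 5^n$. The elementary approximation lemma $\|M\|_{a,b}\le (1-\varepsilon)^{-1}\max_{u\in\mathcal N}\|Mu\|_2$ reduces the supremum over the sphere to the finitely many points of $\mathcal N$, and a union bound over $\mathcal N$ combined with the fixed-direction estimate yields
\begin{equation}
\mathbb{P}\bigl(\|M\|_{a,b} > A\sqrt n\bigr) \le 5^n\, C_0\exp\bigl(-c_0 (A/2)^2 n\bigr) = C_0\exp\bigl(n\log 5 - \tfrac{c_0}{4}A^2 n\bigr).
\end{equation}
For $A$ larger than a suitable threshold $C$ the quadratic rate $\tfrac{c_0}{4}A^2$ dominates the $\log 5$ coming from the cardinality of the net, and since $A^2$ eventually overtakes $A$ the right-hand side is at most $C\exp(-cAn)$, which is the asserted bound \eqref{eq2}; overwhelming probability is then immediate from the definitions.

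The main obstacle is the tension in this last step: the covering number of the sphere grows like $e^{\Theta(n)}$, so the per-direction deviation probability must itself decay like $e^{-\Omega(n)}$ with a rate increasing in $A$ in order to survive the union bound. This is exactly what the sub-Gaussian, hence sub-exponential-square, concentration of $\sum_i \langle r_i, u\rangle^2$ delivers, and the one genuinely quantitative point is to verify that $\tfrac{c_0}{4}A^2$ beats $\log 5$ for all $A\ge C$, thereby pinning down the constant $C$ in the statement. Everything else is the routine machinery of $\varepsilon$-nets and Bernstein-type bounds already in place for Proposition \ref{prop1}.
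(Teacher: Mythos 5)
Your proposal is correct and follows essentially the same route as the paper: a fixed-direction tail bound for $\|Mu\|$ obtained from the independence of the rows together with the uniform sub-Gaussianity of each row's projection $\langle r_i,u\rangle$ (via Cauchy--Schwarz), followed by an $\varepsilon$-net with $\varepsilon=1/2$, a union bound over the exponentially many net points, and the observation that the quadratic-in-$A$ exponential rate absorbs both the net cardinality and the weakening to the linear rate $e^{-cAn}$ stated in \eqref{eq2}. The only cosmetic differences are that you phrase the fixed-direction step in Bernstein/sub-exponential language where the paper multiplies the $\psi_2$ moment bounds of the independent rows and applies Markov, and you use the volumetric bound $(1+2/\varepsilon)^n$ for the net rather than the paper's $C/\varepsilon^{n-1}$ packing estimate; both choices are equivalent in effect.
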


\begin{proof}
See \ref{proof2}.
\end{proof}

\begin{remark}
If a random matrix has its rows uniformly sub-Gaussian, necessarily, any of its coefficients is also uniformly sub-Gaussian. This is trivially seen as for a given coefficient $\xi_{ij}$, the corresponding row $R_i$ is sub-Gaussian, hence there are positive constants $B, b$ that does not depend on $i$ such that for every $t > 0$,
\begin{equation}
\mathbb {P} (\| R_i \ |>t) \leq B \exp( {-b t^{2}} )
\end{equation}
Hence since   $\| R_i \ | > | xi_{ij} |$, we have as well 
\begin{equation}
\mathbb{P} ( | xi_{ij} | > t )\leq B \exp({ -b t^{2} } )
\end{equation}
which proves the uniform sub-Gaussian character of any of the matrix row.
The independence of the matrix rows, however, does not imply that each of the matrix row are independent, making the condition of proposition \ref{prop1} less stringent.
\end{remark}

\section{Conclusion}
This paper investigated an upper bound of the operator norm for sub-Gaussian tailed random matrices. We proved here that random matrices with independent rows that are uniformly sub-Gaussian satisfy the Tracy Widom bound, that is, the matrix operator norm remains bounded by $O(\sqrt n )$. An interesting extension would be to see how we can generalize our result to the $(\ell_p,\ell_r)$-Grothendieck problem, which seeks to maximize the bilinear form $y^T A x$ for an input matrix $A \in {\mathbb R}^{m \times n}$ over vectors $x,y$ with $\|x\|_p=\|y\|_r=1$. We know this problem is equivalent to computing the $p \to r^\ast$ operator norm of $A$, where $\ell_{r^*}$ is the dual norm to $\ell_r$. 

\bibliographystyle{plainnat}
\bibliography{mybib}

\begin{thebibliography}{14}
\providecommand{\natexlab}[1]{#1}
\providecommand{\url}[1]{\texttt{#1}}
\expandafter\ifx\csname urlstyle\endcsname\relax
  \providecommand{\doi}[1]{doi: #1}\else
  \providecommand{\doi}{doi: \begingroup \urlstyle{rm}\Url}\fi

\bibitem[{Benaych-Georges} and {Knowles}(2016)]{Benaych_2018}
Florent {Benaych-Georges} and Antti {Knowles}.
\newblock {Lectures on the local semicircle law for Wigner matrices}.
\newblock \emph{arXiv e-prints}, art. arXiv:1601.04055, January 2016.

\bibitem[Buldygin and Kozachenko(1980)]{Buldygin_1980}
V.~V. Buldygin and Yu.V. Kozachenko.
\newblock Sub-gaussian random variables.
\newblock \emph{Ukrainian Math}, 32:\penalty0 483--489, 1980.

\bibitem[Grothendieck(1956)]{Grothendieck_1956}
A.~Grothendieck.
\newblock R{\'e}sum{\'e} de la th{\'e}orie m{\'e}trique des produits tensoriels
  topologiques.
\newblock \emph{Soc. de Matem{\'a}tica de S{\~a}o Paulo}, 1956.

\bibitem[Ledoux and Talagrand(1991)]{Ledoux_1991}
Michel Ledoux and Michel Talagrand.
\newblock \emph{Probability in Banach Spaces}.
\newblock Springer-Verlag, 1991.

\bibitem[Mar\v{c}enko and Pastur(1967)]{Marcenko_1967}
V.~A. Mar\v{c}enko and L.~A. Pastur.
\newblock {Distribution of eigenvalues for some sets of random matrices}.
\newblock \emph{Mathematics of the USSR-Sbornik}, 1\penalty0 (4):\penalty0
  457--483, April 1967.

\bibitem[Tao()]{Tao_2013}
T.~Tao.
\newblock \emph{Topics in Random Matrix Theory}.
\newblock Graduate studies in mathematics. American Mathematical Soc.
\newblock ISBN 9780821885079.

\bibitem[Tracy and Widom(1994)]{Tracy_1994}
Craig~A. Tracy and Harold Widom.
\newblock Level-spacing distributions and the airy kernel.
\newblock \emph{Comm. Math. Phys.}, 159\penalty0 (1):\penalty0 151--174, 1994.

\bibitem[Vershynin(2018)]{vershynin_2018}
Roman Vershynin.
\newblock \emph{High-Dimensional Probability: An Introduction with Applications
  in Data Science}.
\newblock Cambridge University Press, 2018.

\bibitem[von Neumann and Goldstine(1947)]{VonNeumann_1947}
J.~von Neumann and H.~H. Goldstine.
\newblock Numerical inverting of matrices of high order. bull.
\newblock \emph{Amer. Math. Soc.}, 53\penalty0 (11):\penalty0 1021--1099, 1947.

\bibitem[Wigner(1955)]{Wigner_1955}
E.~P. Wigner.
\newblock Characteristic vectors of bordered matrices with infinite dimensions.
\newblock \emph{The Annals of Mathematics}, 62\penalty0 (3):\penalty0 548--564,
  1955.

\bibitem[Wigner(1958)]{Wigner_1958}
E.~P. Wigner.
\newblock On the distribution of the roots of certain symmetric matrices.
\newblock \emph{The Annals of Mathematics}, 67\penalty0 (2):\penalty0 325--327,
  March 1958.

\bibitem[Wikipedia(2018{\natexlab{a}})]{wiki:Netflix_prize}
Wikipedia.
\newblock Netflix prize, 2018{\natexlab{a}}.
\newblock URL \url{https://en.wikipedia.org/wiki/Netflix_Prize}.

\bibitem[Wikipedia(2018{\natexlab{b}})]{wiki:recommender_system}
Wikipedia.
\newblock Recommender system, 2018{\natexlab{b}}.
\newblock URL \url{https://en.wikipedia.org/wiki/Recommender_system}.

\bibitem[Wishart(1928)]{Wishart_1928}
J.~Wishart.
\newblock The generalised product moment distribution in samples from a normal
  multivariate population.
\newblock \emph{Biometrika}, 20:\penalty0 32--52, July 1928.

\end{thebibliography}
\ifnum\supp=1

\newpage
\appendix
\section{Proofs}
\subsection{Proof of proposition \ref{prop1}}\label{proof1}
We will do the proof thanks to three simple lemmas below that take advantage of the uniform sub-Gaussian tails bounds and the remarkable property of the Lipschitz character of the map $x \rightarrow \| M x \|$, combined with the compacity of the unit sphere.

Let us define the unit sphere $\mathcal{S} := \{ u \in \mathbb{R}^n | \| u \| = 1 \}$ of the $\mathbb{R}^n$ vector space. The result is similar for complex coefficients matrices in which case the unit sphere is modified into $\mathcal{S} := \{ u \in \mathbb{R}^n | \| u \| = 1 \}$ of the $\mathbb{C}^n$.
We will first prove the following lemma

\begin{lemma}\label{lemma1}
If the coefficients $\xi_{i,j}$ of $M$ are independent and have uniformly sub-Gaussian tails, then there exist absolute constants $C, c > 0$ such that for any $u \in \mathcal{S}$, we have
\begin{equation}\label{lemma1_eq}
\mathbb{P} ( \| M u \| > A \sqrt n )\leq C \exp( -c A n)
\end{equation}
for all $ A \geq C$.
\end{lemma}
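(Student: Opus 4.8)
The statement fixes a single unit vector $u \in \mathcal{S}$ and asks for a sub-Gaussian tail bound on $\|Mu\|$. The plan is to exploit the fact that, for fixed $u$, the entries of the vector $Mu$ are independent (across rows) linear combinations of the independent sub-Gaussian entries of $M$, and therefore each is itself sub-Gaussian.

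\begin{proof}
Let $u \in \mathcal{S}$, so that $\|u\|_2 = 1$. The $i$-th coordinate of $Mu$ is $(Mu)_i = \sum_{j=1}^n \xi_{i,j} u_j$. Since the $\xi_{i,j}$ are independent, zero-mean and uniformly sub-Gaussian, there are constants $B, b > 0$, independent of $i, j$, witnessing the $\psi_2$ or moment-generating form of the sub-Gaussian condition recalled in the second remark of Section~\ref{intro}. By the Laplace-transform characterization, a linear combination $\sum_j \xi_{i,j} u_j$ of independent sub-Gaussian variables is again sub-Gaussian, with variance proxy controlled by $\sum_j u_j^2 = \|u\|_2^2 = 1$. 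Concretely, using $\mathbb{E}[e^{\lambda \xi_{i,j}}] \le B e^{\lambda^2 b^2/2}$ and independence across $j$, one gets
\begin{equation}\label{lemma1_mgf}
\mathbb{E}\!\left[ e^{\lambda (Mu)_i} \right] = \prod_{j=1}^n \mathbb{E}\!\left[ e^{\lambda u_j \xi_{i,j}} \right] \le B^n \exp\!\left( \tfrac{\lambda^2 b^2}{2} \sum_{j=1}^n u_j^2 \right) = B^n \exp\!\left( \tfrac{\lambda^2 b^2}{2} \right),
\end{equation}
so that each $(Mu)_i$ has a variance proxy that does \emph{not} grow with $n$. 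The normalization $\|u\|_2 = 1$ is exactly what prevents the variance proxy from blowing up, and this is the crux of the argument.

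The remaining step is to pass from the coordinates to the full Euclidean norm $\|Mu\|_2^2 = \sum_{i=1}^n (Mu)_i^2$. This is a sum of $n$ independent terms (the rows of $M$ are independent, as their entries are), each of which is the square of a sub-Gaussian variable, hence sub-exponential. I would therefore control $\|Mu\|_2^2$ by a Chernoff / moment-generating bound for sums of independent sub-exponential variables: there exist constants $C_1, c_1 > 0$ such that $\mathbb{P}(\|Mu\|_2^2 > t\, n) \le C_1 \exp(-c_1 t n)$ for $t$ large enough. Setting $t = A^2$ and taking square roots gives $\mathbb{P}(\|Mu\|_2 > A \sqrt{n}) \le C_1 \exp(-c_1 A^2 n) \le C \exp(-c A n)$ for all $A \ge C$, which is the claimed bound~\eqref{lemma1_eq}. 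The main obstacle is the sub-exponential tail estimate for $\sum_i (Mu)_i^2$: one must verify that the moment-generating function of $\sum_i (Mu)_i^2 - \mathbb{E}\|Mu\|_2^2$ is finite in a neighborhood of the origin and apply the optimized Chernoff bound, absorbing the factor $B^n$ from~\eqref{lemma1_mgf} into the exponential rate (which is legitimate precisely because the regime of interest is $A \ge C$ with $C$ absolute). Everything else is a routine application of independence across rows and the equivalence of sub-Gaussian characterizations.
\end{proof}
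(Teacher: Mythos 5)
Your proof follows the same skeleton as the paper's: decompose $Mu$ into coordinates $(Mu)_i = R_i u$ coming from independent rows, show each coordinate is sub-Gaussian, then run a Chernoff-type argument on $\|Mu\|^2 = \sum_i (Mu)_i^2$. Where you genuinely differ is the coordinate-level step. The paper bounds $|R_i u| \le \|R_i\|$ by Cauchy--Schwarz and then controls $\|R_i\| \le \sqrt{n}\max_j |\xi_{i,j}|$; this is lossy, since it silently degrades the sub-Gaussian rate by a factor of $n$ (the paper's own display has $b' = b/n$, an $n$-dependence it then ignores). You instead compute the moment generating function of the linear combination $\sum_j u_j \xi_{i,j}$ directly and use $\|u\|_2 = 1$ to keep the variance proxy of each coordinate of order one, independent of $n$. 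This is the sharper, textbook route (essentially that of \cite{vershynin_2018}), and it buys a strictly stronger conclusion: your Bernstein step yields a tail of the form $\exp(-cA^2 n)$ rather than merely $\exp(-cAn)$, whereas the paper's argument, if the $n$-dependence of its constants were tracked honestly, would only give a bound of the shape $B^n e^{-bA^2}$, which does not decay at the claimed rate at all. Your closing reduction (squares of sub-Gaussians are sub-exponential, optimized Chernoff over independent rows, then $A^2 \ge CA$ for $A \ge C$) is standard and correct.

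One step of yours does need repair, however. Your moment-generating bound keeps the prefactor $B^n$, i.e. $\mathbb{E}[e^{\lambda (Mu)_i}] \le B^n e^{\lambda^2 b^2/2}$, and you assert this factor can be ``absorbed into the exponential rate'' because the regime is $A \ge C$ with $C$ absolute. That absorption fails as stated: carrying $B^n$ through the sub-exponential moment generating function of $(Mu)_i^2$ and then through the product over the $n$ rows produces a prefactor of order $B^{n^2}$, and killing it with the exponential term $e^{-cA^2 n}$ would force the threshold $C$ to grow like $\sqrt{n \log B}$, which is no longer an absolute constant (whenever $B > 1$). The correct fix is a standard normalization: for a \emph{centered} sub-Gaussian variable, the Laplace-transform condition can be taken with prefactor $1$, that is $\mathbb{E}[e^{\lambda \xi_{i,j}}] \le e^{\lambda^2 \sigma^2/2}$ for a suitable variance proxy $\sigma$ (at the cost of enlarging $\sigma$). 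With that form, the product over $j$ gives exactly $e^{\lambda^2 \sigma^2/2}$ with no $n$-dependence anywhere, and the rest of your argument goes through verbatim. To be fair, the paper's own proof commits the same sin --- it sets $C = B^n$ and $c = bC$ and calls these ``absolute constants'' --- so your proof, once patched as above, is the one that actually delivers constants independent of $n$.
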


\begin{proof}
Let $R_1, \ldots, R_n$ be the $n$ rows of the matrix $\mathbf{M}$, then the column vector $\mathbf{M} u$ has coefficients 
$R_i u $ for $i = 1, \ldots, n$. 

The matrix coefficients $\xi_{i,j}$ are all uniformly sub Gaussian, hence there are positive constants $B, b> 0$ independent of $i,j$ such that for every $t > 0$,
\begin{equation}
\mathbb{P}(  | \xi_{i,j} | \geq t ) \leq B \exp(-b t^2).
\end{equation}

This implies in particular that $R_i$ is also with sub-Gaussian tails but with different coefficients. This is because we have 
\begin{equation}
\mathbb{P}(  | R_i | \geq t ) \leq \mathbb{P}( \sqrt n \max_{j}| \xi_{i,j} | \geq t ) \leq B \exp(-\frac{b}{n} t^2).
\end{equation}
Hence taking $b'= \frac{b}{n}$, we have
\begin{equation}
\mathbb{P}(  \| R_i \|  \geq t ) \leq B \exp(-b' t^2).
\end{equation}

The Cauchy Schwartz inequality gives us that for $u \in \mathcal{S}$, we have $| R_i u |  \leq \| R_i  \|  \| u \|  = \| R_i  \|  $ as $\| u \| =1$, hence,
\begin{equation}
\mathbb{P}(  | R_i  u | \geq t ) \leq \mathbb{P}(  \| R_i  \| \geq t ) \leq B \exp(-b' t^2).
\end{equation}
which states that $R_i  u$ is uniformly sub-Gaussian or equivalently, that it satisfies the $\psi_2$ condition, that there exist two non negative constants $b, B >0$ (that are different constants from previously) such that:
\begin{equation}
\mathbb{E}[ e^{b | R_i  u | ^2} ] \leq B.
\end{equation}
Because of the assumption that the matrix coefficients are independent, each row $R_i  u$ is also independent and the vector $M u$ satisfies also the  $\psi_2$ condition as:
\begin{equation}
\mathbb{E}[ e^{b \| M u \| ^2} ] = \mathbb{E}[ \prod_{i=1}^n e^{b | R_i  u | ^2} ] =  \prod_{i=1}^n \mathbb{E}[  e^{b | R_i  u | ^2} ]  \leq B^n.
\end{equation}
Let us take $C= B^n$ and take $A \geq C$ and $n \geq 1$. The Markov property gives us
\begin{eqnarray}
\mathbb{P}( \| M u \| \geq A \sqrt{n}  )  = \mathbb{P}( e^{ b \| M u \|^2 } \geq  e^{ b \, A^2  n } )  \leq \frac{ \mathbb{E}[ e^{b \| M u \| ^2} ] } { e^{ b \, A^2 n } }  \leq C e^{ -b \, A^2 n } \leq C e^{ -b \, C A n } 
\end{eqnarray}
Taking $c = b \,C$, we get the required inequality: 
$$
\mathbb{P} ( \| M u \| > A \sqrt n )\leq C \exp( -c A n)
$$ which concludes the proof.
\end{proof}

\begin{remark}
Expressing the lemma \ref{lemma1} in terms of probability, we have proved that for any individual unit vector $u$, the norm of the matrix multiplication of $M$ with $u$, denoted by $\| M u \|$ is with growth at most $\sqrt n$ or equivalently  $\| M u \| = O(\sqrt n )$ with overwhelming probability.
\end{remark}

\begin{remark}
At this stage, we could imagine that equipped with lemma \ref{lemma1}, we could finalize the proof of proposition \ref{prop1}. The slight difference between lemma \ref{lemma1} and proposition \ref{prop1} is the applying set. Lemma \ref{lemma1} states that for any individual unit vector $u$, the norm of the matrix multiplication of $M$ with $u$, denoted by $\| M u \|$ is with growth at most $\sqrt n$. Proposition \ref{prop1} states that the supremum over the unit sphere of any individual unit vector $u$, the norm of the matrix multiplication of $M$ with $u$, denoted by $\| M u \|$ is with growth at most $\sqrt n$. 
We could imagine going from lemma  \ref{lemma1} to proposition \ref{prop1} using the simple union bound on all points of the unit sphere for the operator norm as follows:
\begin{equation}
\mathbb{P}(  \| \mathbf{M} \|_{op} > \lambda ) \leq \mathbb{P}( \bigcup \limits_{u \in \mathcal{S} } \|  \mathbf{M} u \| > \lambda )
\end{equation}
However, we would be stuck as the  unit sphere $\mathcal{S}$ is an uncountable number of points set.

To solve this issue, we shall change the set in the union bound and use the usual trick of maximal $\varepsilon$-net of the unit sphere $\mathcal{S}$, denoted by $\Sigma(\varepsilon)$. This leads to lemma \ref{lemma2}. As we will see shortly, the maximal $\varepsilon$-net of the sphere $\mathcal{S}$ is countable, using standard packing arguments. On this particular set, we can exploit the fact that the map $x \rightarrow \| M x \|$ is Lipschitz with Lipschitz constant given by $\| M \|_{op}$. The induced continuity also us controlling the upper bound of the norm of $\| M v \|$ for $v \in \Sigma(\varepsilon)$.
\end{remark}

\begin{lemma}\label{lemma2}
Let $0 < \varepsilon < 1$ and $\Sigma(\varepsilon)$ be the maximal $\varepsilon$-net of the sphere $\mathcal{S}$, that is the set of points in $\mathcal{S}$ separated from each other by a distance of at least $\varepsilon$ and which is maximal with respect to set inclusion. Then for any $n \times n$ matrix $M$ and any $\lambda > 0$, we
have
\begin{equation}\label{lemma2_eq}
\mathbb{P}(  \| M  \|_{op} > \lambda) \leq \mathbb{P}( \bigcup \limits_{v \in \Sigma(\varepsilon) } \|  \mathbf{M} v \| > \lambda (1-\varepsilon) )
\end{equation}
\end{lemma}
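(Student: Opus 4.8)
The plan is to separate a purely deterministic geometric inequality from the probabilistic conclusion. The entire content of the lemma is that the operator norm, which is a supremum over the \emph{uncountable} sphere $\mathcal{S}$, is controlled by the maximum of $\|Mv\|$ over the \emph{finite} net $\Sigma(\varepsilon)$; once this is established as a deterministic statement valid for every realization of $M$, the probability inequality follows by monotonicity of $\mathbb{P}$. Concretely, I would first prove the deterministic bound
\begin{equation}
\|M\|_{op} \;\le\; \frac{1}{1-\varepsilon}\,\max_{v\in\Sigma(\varepsilon)}\|Mv\|.
\end{equation}

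To obtain this, the first step is to record the \emph{covering} property of the maximal $\varepsilon$-separated set: by maximality with respect to inclusion, no point of $\mathcal{S}$ can lie at distance $\ge\varepsilon$ from all of $\Sigma(\varepsilon)$ (otherwise it could be adjoined to $\Sigma(\varepsilon)$, contradicting maximality), so every $u\in\mathcal{S}$ admits some $v\in\Sigma(\varepsilon)$ with $\|u-v\|<\varepsilon$. The second step uses compactness: since $\mathcal{S}$ is compact in finite dimension and $u\mapsto\|Mu\|$ is continuous (indeed Lipschitz with constant $\|M\|_{op}$), the supremum defining $\|M\|_{op}$ is attained at some $u^{\ast}\in\mathcal{S}$. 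Choosing $v\in\Sigma(\varepsilon)$ close to $u^{\ast}$ and combining the triangle inequality with the Lipschitz bound $\|M(u^{\ast}-v)\|\le\|M\|_{op}\,\|u^{\ast}-v\|$ gives
\begin{equation}
\|M\|_{op}=\|Mu^{\ast}\|\le\|Mv\|+\|M\|_{op}\,\|u^{\ast}-v\|\le\max_{w\in\Sigma(\varepsilon)}\|Mw\|+\varepsilon\,\|M\|_{op},
\end{equation}
and rearranging (legitimate precisely because $0<\varepsilon<1$) yields the displayed deterministic bound.

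Finally, I would translate this into the probability statement. The deterministic inequality shows that the event $\{\|M\|_{op}>\lambda\}$ is contained in the event $\{\max_{v\in\Sigma(\varepsilon)}\|Mv\|>\lambda(1-\varepsilon)\}$, which is exactly $\bigcup_{v\in\Sigma(\varepsilon)}\{\|Mv\|>\lambda(1-\varepsilon)\}$; monotonicity of the probability measure then gives \eqref{lemma2_eq}. I expect the only genuinely delicate point to be the covering step: one must be careful that it is \emph{maximality with respect to inclusion} that forces the covering property (as opposed to maximality of cardinality), and that the strict versus non-strict inequalities relating the separation distance and the covering radius are handled consistently so that the factor $(1-\varepsilon)$ emerges with the correct sign. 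Everything else — the compactness argument for the maximizer, the triangle inequality, and the final monotonicity — is routine.
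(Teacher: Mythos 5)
Your proof is correct and follows essentially the same route as the paper's: compactness plus the Lipschitz property to realize the supremum at some $u^{\ast}\in\mathcal{S}$, maximality of $\Sigma(\varepsilon)$ to find a net point within distance $\varepsilon$, the triangle inequality to compare $\|Mu^{\ast}\|$ with $\|Mv\|$, and then event inclusion with monotonicity of $\mathbb{P}$. Your packaging as an explicit deterministic bound $\|M\|_{op}\le(1-\varepsilon)^{-1}\max_{v\in\Sigma(\varepsilon)}\|Mv\|$ is a slightly cleaner restatement of the same argument, not a different one.
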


\begin{proof}
From the definition of the operator norm (see \ref{operatornorm_def}) as a supremum,  using the fact that the map $x \rightarrow \| M x \|$ is Lipschitz, hence continuous and that the unit sphere  $\mathcal{S}$ is compact as we are in finite dimension, we can find  $x \in \mathcal{S}$ such that it attains the supremum (recall that a continuous function attains its supremum on a compact set).

\begin{equation}
\| M x \| = \| M \|_{op}
\end{equation}
We can eliminate the trivial case of $x$ belonging to $\Sigma(\varepsilon)$ 
as the inequality \ref{lemma2_eq} is easily verified in this scenario. 
In the other case, 
where $x$ does not belong to $\Sigma(\varepsilon)$, there must exist a point $y$ in $\Sigma(\varepsilon)$ 
whose distance to $x$ is less than $\varepsilon$ (otherwise we would have a contradiction of the maximality of $\Sigma(\varepsilon)$  
by including $x$ to $\Sigma(\varepsilon)$). 
We are going now to use the Lipschitz feature of the map  $x \rightarrow \| M x \|$ whose Lipschitz constant given by $\| M \|_{op}$
Since $\| x- y \| \leq \varepsilon$, 
the Lipschitz property gives us
\begin{equation}
\| M (x-y) \| \leq \| M \|_{op}\| x- y \|\leq  M \|_{op}\|  \varepsilon
\end{equation}
The triangular inequality gives us
\begin{equation}
 \| M x \|_{op}  = \| M x \| \leq  \| M (x-y) \|  + \| M y \|  \leq  \ \|  M \|_{op} \varepsilon + \| M y \|
\end{equation}
Hence,
\begin{equation}
 \| M y \| \geq M \|_{op} (1-\varepsilon)
\end{equation}
In particular if  $\| M y \|_{op} > \lambda$, then $\| M y \| >  \lambda (1-\varepsilon)$ which concludes the proof
\end{proof}

\begin{remark}
The lemma \ref{lemma2_eq} is very intuitive. The continuity of the  map  $x \rightarrow \| M x \|$ implies that it attains its maximum on the compact unit sphere.  By packing argument, we have necessarily that around this optimum, there is a point of the maximum set $\Sigma(\varepsilon)$ with a distance lower than $\varepsilon$. As  the map  $x \rightarrow \| M x \|$ is Lipschitz, the decrease between the optimum and this point in $x \rightarrow \| M x \|$ should be at most $ \|  M \|_{op} \varepsilon $ as $ \|  M \|_{op} $ is the constant Lipschitz.
\end{remark}

We recall last but not least that the cardinality of the maximal $\varepsilon$-net of the sphere $\mathcal{S}$, $\Sigma(\varepsilon)$ should be polynomial at most in $n-1$, the dimension of the sphere with the following two lemmas

\begin{lemma}\label{lemma3}
Let $0 < \varepsilon < 1$, and let $\Sigma(\varepsilon)$ be a maximal $\varepsilon$-net of the unit sphere $\mathcal{S}$. Then $\Sigma(\varepsilon)$  has cardinality at most
$\frac{C }{\varepsilon^{n-1}} $ for some non negative constant $C > 0$ and at least $\frac{c}{\varepsilon^{n-1}}$  for some constant $c >0$.
\end{lemma}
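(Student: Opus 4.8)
The plan is to exploit the two defining features of a maximal $\varepsilon$-net of the sphere. By construction the points of $\Sigma(\varepsilon)$ are pairwise separated by a distance of at least $\varepsilon$; and by maximality no point of $\mathcal{S}$ can lie at distance more than $\varepsilon$ from $\Sigma(\varepsilon)$ (otherwise we could adjoin it, contradicting maximality), so $\Sigma(\varepsilon)$ is simultaneously an $\varepsilon$-covering of $\mathcal{S}$. The two inequalities then follow from a standard volumetric comparison, the separation property giving the upper bound and the covering property giving the lower bound. The exponent $n-1$ simply records that $\mathcal{S}$ is an $(n-1)$-dimensional surface sitting inside $\mathbb{R}^n$.

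For the upper bound I would use the separation (packing) property. Around each $v \in \Sigma(\varepsilon)$ place the open Euclidean ball $B(v,\varepsilon/2)$; since the centers are at mutual distance at least $\varepsilon$ these balls are pairwise disjoint, and since each center lies on $\mathcal{S}$ every such ball is contained in the thin shell $\{y : 1-\varepsilon/2 \le \|y\| \le 1+\varepsilon/2\}$. Comparing the total volume of the disjoint balls with the volume of the shell gives $|\Sigma(\varepsilon)|\, c_n (\varepsilon/2)^n \le c_n \big[(1+\varepsilon/2)^n-(1-\varepsilon/2)^n\big]$, where $c_n$ denotes the volume of the unit ball. The one point deserving care is to extract a single factor of $\varepsilon$ from the shell volume: the shell has thickness $\varepsilon$, so its volume is $O(\varepsilon)$ rather than $O(1)$, and this is exactly what converts the naive bound $\varepsilon^{-n}$ into the claimed $\varepsilon^{-(n-1)}$, yielding $|\Sigma(\varepsilon)| \le C\,\varepsilon^{-(n-1)}$.

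For the lower bound I would use the covering property. The spherical caps $\{u \in \mathcal{S} : \|u-v\| \le \varepsilon\}$, indexed by $v \in \Sigma(\varepsilon)$, cover $\mathcal{S}$, so the $(n-1)$-dimensional surface area of $\mathcal{S}$ is at most the sum of the areas of these caps. Bounding the area of a single cap of Euclidean radius $\varepsilon$ from above by a constant multiple of $\varepsilon^{n-1}$ (the cap projects into a disk of radius $\varepsilon$ in the tangent hyperplane) and dividing the fixed total area of $\mathcal{S}$ by this estimate produces $|\Sigma(\varepsilon)| \ge c\,\varepsilon^{-(n-1)}$.

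The main obstacle is the lower bound rather than the upper one: the packing argument is an elementary disjointness-plus-volume computation, whereas the covering argument requires a clean estimate for the surface area of a small spherical cap, and once that estimate is in hand everything is routine. I would also remark that the constants $C$ and $c$ here may depend on the dimension $n$ (through factors such as $c_n$ and $2^n$), but this is harmless for the intended use: $\varepsilon$ is fixed in the application and the union bound over $\Sigma(\varepsilon)$ is controlled by the exponential tail $e^{-cAn}$ of Lemma \ref{lemma1}, so any such dimensional dependence is absorbed by taking $A$ large.
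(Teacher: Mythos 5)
Your proof is correct, and your upper bound is exactly the paper's argument: the disjoint balls of radius $\varepsilon/2$ centered at the points of $\Sigma(\varepsilon)$ sit inside the shell of radii $1\pm\varepsilon/2$, and the shell's thickness $\varepsilon$ supplies the single factor of $\varepsilon$ (via $(1+\varepsilon/2)^n-(1-\varepsilon/2)^n\le n\varepsilon(3/2)^{n-1}$ for $\varepsilon<1$) that turns $\varepsilon^{-n}$ into $\varepsilon^{-(n-1)}$. Your lower bound, however, takes a genuinely different route, and in fact a sounder one. You invoke the standard duality that a maximal $\varepsilon$-separated subset of $\mathcal{S}$ is automatically an $\varepsilon$-covering, then divide the surface area of $\mathcal{S}$ by an upper bound $C\varepsilon^{n-1}$ on the area of a cap of chordal radius $\varepsilon$; this cap estimate is indeed the one technical point, and it follows by writing the cap as a graph over a disk of radius at most $\varepsilon$ in the hyperplane orthogonal to its center, with Jacobian at most $2$ when $\varepsilon<1$. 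The paper instead builds a second, sequential packing of the same shell by $\varepsilon/2$-balls centered on $\mathcal{S}$, and asserts both that a packing with (shell volume)/(ball volume) many balls exists and that inclusion-maximality of $\Sigma(\varepsilon)$ forces its cardinality to be at least the number of those centers. Neither assertion is justified as written: the volume ratio is an \emph{upper} bound on the size of any packing, not a guarantee that one of that size exists, and maximality with respect to set inclusion does not imply maximum cardinality. So where the paper's lower-bound proof has a real gap, your covering argument closes it; the price is the cap-area estimate, which you could even avoid while keeping the paper's volumetric flavor: once $\Sigma(\varepsilon)$ is known to be a covering of $\mathcal{S}$, the balls $B(v,3\varepsilon/2)$, $v\in\Sigma(\varepsilon)$, cover the whole shell, and comparing $n$-dimensional volumes again yields $|\Sigma(\varepsilon)|\ge c/\varepsilon^{n-1}$. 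Your closing remark about dimension-dependent constants is also apt and applies equally to the paper's own proof (its shell-to-ball ratio carries a factor on the order of $n2^n$); this is harmless in the proof of Proposition \ref{prop1}, where $\varepsilon=1/2$ is fixed and such factors are absorbed into the exponential $e^{-cAn}$, exactly as the paper absorbs its $2^{n-1}$ term.
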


\begin{proof}
The proof is quite intuitive and simple. It relies on a volume packing argument. The balls of radius $\varepsilon /  2$ centered around each point of  $\Sigma(\varepsilon)$ are disjoint and they are in the same numbers as the cardinal of $\Sigma(\varepsilon)$. By the triangular inequality, and using the fact that $\varepsilon  / 2 <1$, all these balls are contained within the intersection of the large ball of radius $(1+\varepsilon  / 2)$ and center the origin, and the smaller ball of radius $(1-\varepsilon / 2)$ and center the same origin. Hence, (using the fact that the volume of a ball is a constant times the radius to the power the dimension of the space), we can pack at most 
$$
\frac{ (1+\varepsilon  / 2 )^{n} - (1-\varepsilon  /2)^{n} }{ (\varepsilon  /2)^n}
$$
of these balls, which proves that the cardinality is at most $(C / \varepsilon)) ^{n-1} $ for some non negative constant $C > 0$ as the constant is for $\varepsilon $ small equivalent to $\frac{2 n} {\varepsilon ^{n-1}}$.

Reciprocally, for $\varepsilon < 2$, the $\Sigma(\varepsilon)$ is not empty. If we sequentially pack the space between the same previous large and small balll of radius $(1+\varepsilon  / 2)$ and $(1-\varepsilon / 2)$ respectively, both centered at the origin, with balls that do not intersect and have radius $\varepsilon  / 2$ and with centers on the unit sphere, we can take the set of the centers of these balls. As the balls of radius $\varepsilon  / 2$ do not intersect, by the triangular inequality, their centers are at least at a distance greater or equal to $\varepsilon$. Because $\Sigma(\varepsilon)$ is a maximal set, its cardinality should be at least equal to the number of previously created centers. We can pack 
$$
\frac{ (1+\varepsilon  / 2 )^{n} - (1-\varepsilon  /2)^{n} }{ (\varepsilon  /2)^n}
$$
of these centers, which proves that the cardinality is at least  $(c / \varepsilon)) ^{n-1} $ for some non negative constant $c > 0$ 
\end{proof}

\begin{proof}
We can now prove proposition \ref{prop1} as follows. 
Using lemma \ref{lemma2} and the union bound, we have
\begin{equation}
\mathbb{P}(  \| M  \|_{op} > A \sqrt{n}) \leq \mathbb{P}( \bigcup \limits_{v \in \Sigma(\varepsilon) } \|  \mathbf{M} v \| > A \sqrt{n} (1-\varepsilon) ) \leq \sum_{v \in \Sigma(\varepsilon)}  \mathbb{P}(  \|  \mathbf{M} v \| > A \sqrt{n} (1-\varepsilon)  )
\end{equation}

Lemma \ref{lemma1} states that for $v \in \mathcal{S}$, there exist absolute constants $C,c > 0$ such that 
\begin{equation}
\mathbb{P}(  \|  \mathbf{M} v \| > A \sqrt{n} (1-\varepsilon)  ) \leq C \exp( -c A  (1-\varepsilon) n )
\end{equation}

Since the cardinality of $\Sigma(\varepsilon) $ is bounded by $\frac{K }{\varepsilon^{n-1}}$, we can upper bound $\mathbb{P}(  \| M  \|_{op} > A \sqrt{n})$ by
\begin{equation}
\mathbb{P}(  \| M  \|_{op} > A \sqrt{n}) \leq \frac{K }{\varepsilon^{n-1}} C \exp( -c A  (1-\varepsilon) n ) 
\end{equation}

Fixing $\varepsilon = 1 / 2$, denoting by $C ' = K C $ and taking $c'$ such that $c'A = cA / 2 - \ln 2$, we have
\begin{equation}
\mathbb{P}(  \| M  \|_{op} > A \sqrt{n}) \leq C' exp( -c' A n )
\end{equation}
which concludes the proof.
\end{proof}

\subsection{Proof of proposition \ref{prop2}}\label{proof2}
\begin{proof}
This is exactly the same reasoning as proposition \ref{prop1} but starting with the fact that any of the matrix $M$ row is uniformly sub-Gaussian  for the norm $L_a$. This means that there exist absolute constants $B,b>0$ such that for any $i=1, \ldots,n$
\begin{equation}
\mathbb{P}( \| R_i \|_a  \geq t ) \leq B \exp(-b t^2).
\end{equation}
for the norm $L_a$. The independence of the rows allows us proving the following lemma (similar to lemma \ref{lemma1}) that under the condition of proposition \ref{prop2}, there exist absolute constants $C, c > 0$ such that for any $u \in \mathcal{S}$, we have
\begin{equation}\label{lemma1_eq2}
\mathbb{P} ( \| M u \| > A \sqrt n )\leq C \exp( -c A n)
\end{equation}
for all $ A \geq C$. lemma \ref{lemma2} and \ref{lemma3} remain unchanged allowing to conclude.
\end{proof}

\end{document}